\documentclass[11pt]{article}

\usepackage[top=1in, left=1in, bottom=1.5in, right=1.5in]{geometry}

\usepackage{hyperref}  %provides clickable links to references within the document
\usepackage{setspace} 

\usepackage{graphicx}
\usepackage{amsfonts}
\usepackage{amssymb}
\usepackage{amsmath}
\usepackage{amsthm}

\newtheorem{theorem}{Theorem}[section]

\newtheorem{lemma}[theorem]{Lemma}
\newtheorem{conjecture}[theorem]{Conjecture}
\newtheorem{claim}{}[theorem]

\newcommand{\del}{\backslash}

\newcommand{\cB}{\mathcal{B}}

\title{A counterexample to Hickingbotham's conjecture\\
 about $k$-ghost-edges}

\author{Rong Chen\\
\\
Center for Discrete Mathematics,\ \ Fuzhou University\\
Fuzhou,\ \ P. R. China}
\begin{document}

\maketitle

\footnote{Mathematics Subject Classification: 05C15, 05C17, 05C69

Email: rongchen@fzu.edu.cn
}

\begin{abstract}
Fix $k\in \mathbb{N}$ and let $G$ be a connected graph with $tw(G)\leq k$. We say that $xy\in E(G^c)$ is a {\em $k$-ghost-edge} of $G$ if for every tree decomposition $(T,\cB)$ of $G$ with width at most $k$, the set $\{x,y\}$ is contained in a bag of $(T,\cB)$. Although a $k$-ghost-edge of $G$ is not an edge of $G$, but it behaves like real edges with respect to tree decomposition of $G$ with width at most $k$. For any graph $G$ with treewidth $k$ and $xy\in E(G^c)$, when there are at least $k+1$ internally vertex disjoint $(x,y)$-paths, Hickingbotham proved that $xy$ is a $k$-ghost-edge of $G$; while when there are at most $k$ internally vertex disjoint $(x,y)$-paths, he conjectured that it is not a $k$-ghost-edge of $G$. In this paper, we prove that this conjecture is wrong.

{\it\bf Key Words}: tree decompositions, treewidth, $k$-ghost-edges
\end{abstract}

\section{Introduction}

All graphs considered in this paper are finite and simple. 
%Treewidth, pathwidth and treedepth paly a major role in the structural graph theory, in particular in the study of general sparse graph classes. Every $\ell$-vertex path has treedepth $\text{log}_2{\ell}$, and every graph with no such path has treedepth less than $\ell$. Similarly, pathwidth is approximated by maximum height of a complete binary tree minor: every graph containing a complete binary tree of height $h$ as a minor has pathwidth at least $\lfloor{h \over 2}\rfloor$, and every graph with no such minor has pathwidth at most $\mathcal{O}(2^h)$. For bath parameters, the exponential gap between the respective lower and upper bounds can not be avoided, as witnessed by complete graphs. Treewidth is approximated by the maximum size of a grid minor, but the gap is polynomial: while every graph containing a $k\times k$ grid as a minor has treewidth at least $k$, every graph with no such minor has treewith polynomial in $k$.
Tree decompositions and path decompositions are fundamental objects in graph theory. If a graph $G$ has large pathwidth and there is a tree decomposition $(T,\cB)$ of $G$ with small width and such that $T$ is a subtree of $G$, then $T$ will also have large pathwidth \cite{Hick19}. The crucial question is when does a graph $G$ have a tree decomposition with small width that is indexed by a subtree of $G$. More generally speaking, suppose that a graph $G$ has small treewidth, and consider all tree decompositions $(T,\cB)$ whose width is not too much larger than the optimum. To what extent can we choose or manipulate the ``shape'' of $T$?

For graphs with no long path, we can choose $T$ to also have no long path; this gives rise to
the parameter called treedepth \cite{NdP12}. Similarly, for graphs of bounded degree, we can choose $T$ to also have bounded degree \cite{DO95}; this relates to the parameters of congestion and dilation. Moreover, for graphs excluding any tree as a minor, we can choose $T$ to just be a path; this results in the parameter called pathwidth \cite{BRST91}. It would be wonderful if we could unify all such results into a single theorem which relates the shape of $T$ to $G$. In 2019, Dvo\v r\'ak suggested one way of accomplishing this goal. In the conjecture below and throughout this paper, we write $tw(G)$ for the treewidth of $G$.

\begin{conjecture}\label{ques}(\cite{D19})
There exists a polynomial function $f$ such that every connected graph $G$ has a tree decomposition $(T,\cB)$ of width at most $f(tw(G))$ such that $T$ is a subgraph of $G$.
\end{conjecture}
Hickingbotham is the first one to study Conjecture \ref{ques}. Although he found a family of graphs satisfing Conjecture \ref{ques}, but he conjectured it wrong \cite{Hick19}.

\begin{conjecture}\label{conj-1}(\cite{Hick19})
For any integer $k\geq1$, there is a connected graph $G_k$ with $tw(G_k)\leq2$  such that every tree decomposition $(T,\cB)$ of $G_k$ has width at least $k$, where $T$ is a subtree of $G$.
\end{conjecture}

Blanco et.al in \cite{BCHHIM23} independently considered Conjecture \ref{ques} and proved that Conjecture \ref{conj-1} is true. Ghost edges is a tool defined by Hickingbotham in \cite{Hick19} to construct graphs satisfying Conjecture \ref{conj-1}. %Examples given in \cite{BCHHIM23} to prove that Conjecture \ref{conj-1} is true have many ghost edges.
Fix $k\in \mathbb{N}$ and let $G$ be a connected graph with $tw(G)\leq k$. We say that $xy\in E(G^c)$ is a {\em $k$-ghost-edge} of $G$ if for every tree decomposition $(T,\cB)$ of $G$ with width at most $k$, we  have $x,y\in B_t$ for some $t\in V(T)$. Although $k$-ghost-edges of $G$ are not in $E(G)$, but they behave like real edges with respect to tree decomposition with width at most $k$. Hickingbotham in \cite{Hick19} proved that for any graph $G$ with treewidth $k$ and $xy\in E(G^c)$, if there are at least $k+1$ internally vertex disjoint $(x,y)$-paths, then $xy$ is a $k$-ghost-edge of $G$; %and conjectured the following conjecture and Conjecture \ref{ques} holding for graphs without $k$-ghost-edges. %
and proposed the following conjecture.

\begin{conjecture}\label{conj}(\cite{Hick19})
For any graph $G$ with $tw(G)\leq k$ and $xy\in E(G^c)$, if there are at most $k$ internally vertex disjoint $(x,y)$-paths, then $xy$ is not a $k$-ghost-edge of $G$.
\end{conjecture}

In this paper, we prove that %give a counterexample to Conjecture \ref{conj}. That is,

\begin{theorem} \label{main result}
Conjecture \ref{conj} is not true.
\end{theorem}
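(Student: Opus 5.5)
We prove Theorem~\ref{main result} by exhibiting one explicit graph $G$ and a non-edge $xy$ for a small value of $k$, probably $k=2$ (for $k=1$, trees can be handled directly and give no counterexample). Three things must then be checked:
\begin{itemize}
\item $tw(G)\le k$;
\item there are at most $k$ internally vertex disjoint $(x,y)$-paths, which by Menger means exhibiting an $(x,y)$-separator $S$ with $|S|\le k$;
\item $xy$ is a $k$-ghost-edge of $G$.
\end{itemize}
The first two are routine: write down one explicit decomposition of width $k$ and point to $S$. All the difficulty is in the third.

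For the ghost-edge property I will argue by contradiction. Suppose $(T,\cB)$ has width at most $k$ and no bag contains both $x$ and $y$. The subtrees of $T$ induced by $x$ and by $y$ are then disjoint, so some edge $tt'$ on the path between them gives an adhesion $A=B_t\cap B_{t'}$. This set separates $x$ from $y$, has $x,y\notin A$, and satisfies $|A|\le k+1$ and $A\subseteq B_t$.

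Hickingbotham's theorem is the engine. Every pair of non-adjacent vertices joined by at least $k+1$ internally disjoint paths is a $k$-ghost-edge. Hence I may replace $G$ by the graph $G^+$ obtained by adding all such pairs, iterating the closure. Every width-$\le k$ decomposition of $G$ is also one of $G^+$, so bags of $G^+$ must respect these extra edges. Adding edges with an endpoint in a minimum separator never kills that separator. So the construction must keep $\kappa_G(x,y)\le k$, and the contradiction has to come from $A$ and its bag: an adhesion of size $k+1$ lies in a bag $B_t$ with $|B_t|=k+1$, so $B_t=A$.

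I will therefore design $G$ so that, in $G^+$:
\begin{itemize}
\item every $(x,y)$-separator of size at most $k+1$ avoiding $x,y$ must contain a fixed small set, say $\{a,b\}$ with $x,y$ both adjacent to $a$ and $b$;
\item for every candidate $A\supseteq\{a,b\}$, placing $A$ as a full bag while keeping $x$ and $y$ on opposite sides forces some further ghost edge, or a $K_{k+2}$ minor, inside one side together with $A$. That violates width $k$.
\end{itemize}
Concretely, I would start from $K_{2,n}$ on $\{a,b\}$ versus $\{x,y,c_1,\dots\}$, so that $ab$ is a $2$-ghost-edge. I would then attach gadgets, namely triangles or short paths between $x$ and the $c_i$ and between $y$ and the $c_i$. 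The gadgets are chosen so that each bag $\{a,b,z\}$ separating $x$ from $y$ leaves a side whose graph with $\{a,b,z\}$ made into a clique has treewidth $3$, while $\{a,b\}$ remains an $(x,y)$-separator of size $2$.

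The main obstacle is this final case analysis: showing that every adhesion $A$ separating $x$ from $y$ is infeasible, while keeping both $tw(G)\le 2$ and $\kappa(x,y)\le 2$. These pull in opposite directions, and naive gadgets either raise the treewidth or admit a star-like decomposition, with bags $\{a,b,v\}$, that splits $x$ from $y$. I will first try small candidates and check each possible third vertex $z$ of the bag $\{a,b,z\}$ by hand. If this fails, I will look for a structure where the forced bag through $\{a,b\}$ must itself contain $x$, after which a second ghost-edge argument on the $y$ side forces $y$ into that same bag.
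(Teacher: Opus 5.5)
\paragraph{There is a genuine gap.} The theorem is purely existential, so its whole content is an explicit graph plus a verification, and your proposal never supplies one. It ends with an unresolved case analysis and a fallback plan.

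\paragraph{Why $k=2$ cannot work.} Conjecture~\ref{conj} is true for $k\le 2$, so no gadget will succeed at your target value. Let $tw(G)\le 2$, $xy\notin E(G)$ and $\kappa(x,y)\le 2$.
\begin{itemize}
\item If a single vertex $v$ separates $x$ from $y$, let $X$ be the component of $G-v$ containing $x$. Join width-2 decompositions of $G[X\cup\{v\}]$ and $G-X$ through a new node with bag $\{v\}$.
\item Otherwise take a separator $\{a,b\}$ with $x,y\notin\{a,b\}$, and let $X$ be the component of $G-\{a,b\}$ containing $x$. Since no single vertex separates $x$ from $y$, both $a$ and $b$ have neighbours in $X$ and in the component containing $y$. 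So there are $a$--$b$ paths with interiors on either side.
\item Contracting such a path shows that $G[X\cup\{a,b\}]+ab$ and $(G-X)+ab$ are minors of $G$. Hence both have treewidth at most $2$.
\item Join width-2 decompositions of these two graphs through a new node with bag $\{a,b\}$. This gives a width-2 decomposition of $G$ in which no bag contains both $x$ and $y$.
\end{itemize}
Your requirement that every candidate adhesion $A\supseteq\{a,b\}$ be infeasible therefore already fails for $A=\{a,b\}$ itself. The separating adhesion need not have size $k+1$. Also, a minimum $2$-separator is always completed to a clique for free by a single path on the far side. Your $K_{2,n}$ start, which makes $ab$ a ghost edge, only makes gluing at $\{a,b\}$ easier.

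\paragraph{What the paper does instead.} A counterexample needs a minimum separator $S$ with $|S|\ge 3$. There the far side need not contain a rooted clique minor on $S$, so completing $S$ on one side can genuinely raise the treewidth. The paper works at $k=4$ with an explicit graph $G$ whose pieces $C_1,C_2$ (the components of $G$ minus $x$, $y$ and the edge $d_2d_4$) are 2-connected.
\begin{itemize}
\item $tw(G)=4$, by a $K_5$ minor together with an explicit decomposition.
\item $G$ has exactly four internally disjoint $(x,y)$-paths.
\item The only $(x,y)$-separator of size at most $4$ avoiding $x,y$ is $A=\{a_1,a_2,a_3,a_4\}$.
\item For a closest pair $s\in V(T_x)$, $t\in V(T_y)$ this forces $B_s=A\cup\{x\}$.
\item That bag is then refuted by a local argument on the connected subgraph $H=G[B_s\cup\{b_1,\dots,b_4,d_1,\dots,d_4\}]$.
\end{itemize}
Your skeleton (an adhesion separating $x$ from $y$, plus forcing all small separators to be essentially unique) matches this strategy. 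Without a concrete graph of this kind, with $k\ge 3$, nothing is proved.
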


%%%%%%%%%%%%%%%%%%%%%%%%%%%%%%%%%%%%%%%%%%%%%%%%%%%%%%%%%%%%%%%%%%%%%%%%%%%%%%%%%%%%%%%%%%%%%%%%%%%%%%%%%%%
%%%%%%%%%%%%%%%%%%%%%%%%%%%%%%%%%%%%%%%%%%%%%%%%%%%%%%%%%%%%%%%%%%%%%%%%%%%%%%%%%%%%%%%%%%%%%%%%%%%%%%%%%%%
\section{Proof of Theorem \ref{main result}} %Finding a large complete bipartite minor }

Let $G$ be a graph and $T$ be a tree. Let $\cB:=\{B_t \subseteq V(G): t \in V(T)\}$ be a set of subsets of $V(G)$ indexed by the vertices of $T$. Each $B_x$ is called a \emph{bag}. Set $T_v:=T[\{t \in V(T): v \in B_t\}]$. The pair $(T, \cB)$ is a \emph{tree-decomposition} of $G$ if
\begin{enumerate}
    \item[(T1)] $V(G)=\bigcup_{t \in V(T)} B_t$;
    \item[(T2)] for each edge $ uv \in E(G)$, there is a vertex $t \in V(T)$ such that $u,v \in B_t$;
    \item[(T3)] for each vertex $v\in V(G)$, the subgraph $T_v$ of $T$ is connected.
\end{enumerate}
The {\em width} of  $(T, \cB)$ is then the maximum, over all $t\in V(T)$, of $|B_t|-1$. The {\em treewidth} of  $G$, denoted by $tw(G)$, is the minimum width of a tree decomposition of $G$. If a tree decomposition  $(T, \cB)$  of $G$ has width $tw(G)$, we say that $(T, \cB)$ is {\em optimal}.

For any subgraph $H$ of $G$, similarly we can define $T_H:=T[\{t \in V(T): B_t\cap V(H)\neq\emptyset\}]$ and $\cB\cap V(H):=\{B_t\cap V(H):\  t \in V(T)\}$. Lemma \ref{subgraph} is a folklore result and follows immediately from the definition of tree decomposition.
\begin{lemma}\label{subgraph}
Let $(T, \cB)$ be a tree decomposition of a graph $G$. For every connected subgraph $H$ of $G$, the graph $T_H$ is connected and $(T_H,\cB\cap V(H))$ is also a tree decomposition of $H$.
\end{lemma}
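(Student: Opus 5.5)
We will verify the two assertions of Lemma~\ref{subgraph} directly from (T1)--(T3), first showing that $T_H$ is connected and then checking the three axioms for the restricted decomposition.

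\textbf{Connectivity of $T_H$.} By definition, the vertex set of $T_H$ is $\bigcup_{v\in V(H)} V(T_v)$. Each $T_v$ is a connected subtree of $T$ by (T3). Since $H$ is connected, we order $V(H)$ as $v_1,\dots,v_n$ so that each $v_i$ with $i\geq 2$ has a neighbour $v_j$ in $H$ with $j<i$; for instance, take a BFS order of a spanning tree of $H$. We show by induction on $i$ that $U_i:=\bigcup_{j\le i}V(T_{v_j})$ induces a connected subgraph of $T$. The case $i=1$ is (T3). For $i\geq 2$, the edge $v_iv_j$ lies in $E(G)$, so by (T2) some node $t$ satisfies $v_i,v_j\in B_t$. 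Hence $t\in V(T_{v_i})\cap V(T_{v_j})$. The union of two connected subgraphs of $T$ that share a node is connected, and any edges of $T$ among the union are present in the induced subgraph. So $T[U_i]$ is connected. Since $U_n=V(T_H)$, the graph $T_H$ is connected. As an induced connected subgraph of a tree, it is itself a tree, so it can index a tree decomposition.

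\textbf{The axioms for $(T_H,\cB\cap V(H))$.}
\begin{enumerate}
\item[(T1)] Each $v\in V(H)$ lies in some bag $B_t$ by (T1) for $G$. Then $B_t\cap V(H)\neq\emptyset$, so $t\in V(T_H)$, and $v\in B_t\cap V(H)$.
\item[(T2)] Each edge $uv\in E(H)$ is an edge of $G$, so some $B_t$ contains $u$ and $v$. This $t$ belongs to $T_H$, and the restricted bag $B_t\cap V(H)$ still contains both $u$ and $v$.
\item[(T3)] For $v\in V(H)$, the nodes of $T_H$ whose restricted bag contains $v$ are exactly the nodes of $T_v$, and all of these lie in $T_H$. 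Since $T_H$ is an induced subgraph of $T$, the subgraph it induces on these nodes is $T_v$ itself, which is connected.
\end{enumerate}

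\textbf{Main obstacle.} The only nonroutine step is the connectivity of $T_H$. It is handled by ordering $V(H)$ along a spanning tree of $H$ and using (T2) to guarantee that consecutive subtrees $T_{v_i}$ and $T_{v_j}$ overlap.
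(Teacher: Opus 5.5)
Your proof is correct. It is the routine verification from (T1)--(T3) that the paper alludes to when it calls the lemma folklore that ``follows immediately from the definition'' and gives no proof, and the one step with real content, the connectivity of $T_H$, is handled properly by gluing the subtrees $T_v$ along an ordering of $V(H)$ in which each vertex has an earlier neighbour, with (T2) supplying the overlaps (the nonemptiness of $T_{v_1}$ in your base case comes from (T1) rather than (T3), which is harmless).
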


\begin{figure}[htbp]
\begin{center}
\includegraphics[height=7cm,page=1]{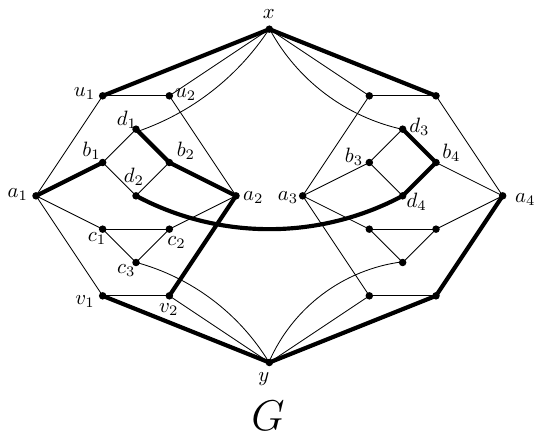}
\caption{The bold connected subgraphs in $G$ are branch sets of a $K_5$-minor of $G$. }
\label{Gn}
\end{center}
\end{figure}

\begin{figure}[htbp]
\begin{center}
\includegraphics[height=5cm,page=2]{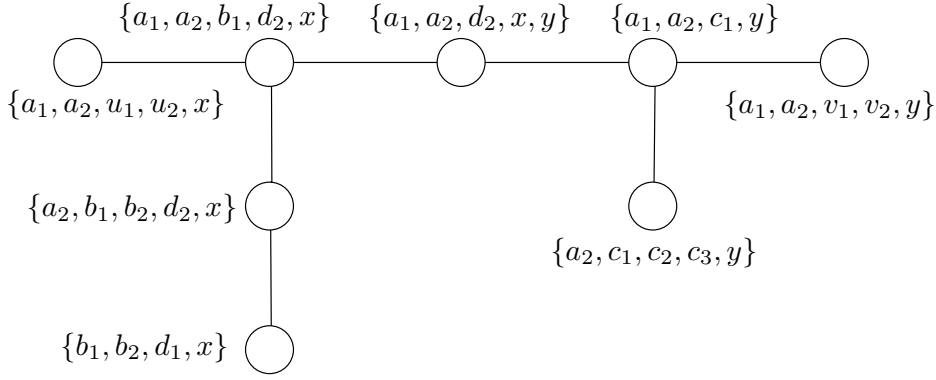}
\caption{a tree decomposition of $H_1$ with width $4$ and with $\{x,y,d_2\}$ contained in a bag. }
\label{tree}
\end{center}
\end{figure}

\begin{theorem}\label{main}
Let $G$ be the graph pictured as Figure 1. Then $tw(G)=4$ and $xy$ is a $4$-ghost-edge of $G$.
\end{theorem}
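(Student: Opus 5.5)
We do not have the picture of $G$, but the captions indicate the structure: $G$ contains a $K_5$-minor whose branch sets are drawn in bold, and $G$ contains a subgraph $H_1$ with a width-$4$ tree decomposition in which $\{x,y,d_2\}$ lies in a single bag. The proof therefore has three parts, carried out in this order.

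\textbf{Step 1: $tw(G)\ge 4$.} The five bold branch sets in Figure 1 form a $K_5$-minor. Treewidth does not increase under taking minors, and $tw(K_5)=4$. Hence $tw(G)\geq 4$.

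\textbf{Step 2: $tw(G)\le 4$.} We exhibit an explicit decomposition of width $4$. Presumably $G$ is assembled from a few pieces ($H_1$ and symmetric copies $H_2,\dots$) glued along small separators containing $x$ and $y$. Figure 2 gives a width-$4$ decomposition of $H_1$ with a bag containing $\{x,y,d_2\}$. We build analogous decompositions of the other pieces and join them at the bags containing the common separator vertices. Axiom (T3) survives the gluing because the only shared vertices are the separator vertices, and each of these lies in the joining bags. If every piece has a decomposition with $x$ and $y$ in a common bag, this step also shows that $xy$ is not trivially excluded from bags. It also shows that $G$ has at most $4$ internally disjoint $(x,y)$-paths, since some bag separates $x$ from $y$ with few vertices; this is the point of the counterexample, and we verify it directly via a small $(x,y)$-separator of size at most $4$ in $G-\{x,y\}$.

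\textbf{Step 3 (main obstacle): $xy$ is a $4$-ghost-edge.} Fix any tree decomposition $(T,\mathcal{B})$ of $G$ of width at most $4$. Suppose for contradiction that $T_x\cap T_y=\emptyset$. Then there is an edge $st$ of $T$ separating $T_x$ from $T_y$, and $S:=B_s\cap B_t$ is a separator of $G$ with $|S|\le 4$ and $x,y\notin S$. Every $(x,y)$-path meets $S$. We then use Lemma~\ref{subgraph} on the connected branch sets of the $K_5$-minor. Each branch set $X_i$ gives a subtree $T_{X_i}$, and these subtrees pairwise intersect, so by the Helly property some bag $B_r$ meets all five branch sets. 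Because $|B_r|\le 5$, $B_r$ contains exactly one vertex from each branch set.

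The plan is to show that every such ``rainbow'' bag is forced to contain $x$ and $y$. Alternatively, we combine this with $S$: the separator $S$ plus the $K_5$-structure would force a second $K_5$-minor-like configuration on one side, or a $K_{3,3}$-type obstruction, that needs a bag of size $6$. Concretely, $x$ and $y$ should each lie in, or be adjacent to, several branch sets. Contracting the side of the cut containing $y$ onto $S$ together with $y$ turns $xy$ into an effective edge, producing a minor of treewidth $5$ (for instance $K_6$ minus a matching, or $K_5$ plus an apex) inside a graph that would still have a width-$4$ decomposition derived from $(T,\mathcal{B})$. That is the contradiction.

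The key difficulty is the case analysis on how $S$ (at most $4$ vertices) can intersect the branch sets and the $(x,y)$-paths. I would first try to show that each of the at most four internally disjoint $(x,y)$-path families must use a distinct vertex of $S$. Then $S$ is pinned down to a specific small set, probably including $d_2$ and its symmetric counterparts. For that specific $S$, I would check that one side of the cut together with $S$ contains a $K_5$-minor rooted so that $S\cup\{x\}$ cannot fit into bags of size $5$ on that side. This yields the contradiction and proves $x,y\in B_t$ for some $t$.
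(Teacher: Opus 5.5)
Your Steps 1 and 2 agree with the paper in outline. The lower bound comes from the $K_5$-minor. The upper bound glues width-$4$ decompositions of the two isomorphic halves $H_1,H_2$, which meet only in $\{x,y\}$, through a new bag $\{x,y,d_2,d_4\}$. That bag covers the single edge $d_2d_4$ lying in neither half, which is why Figure 2 needs $\{x,y,d_2\}$ in one bag. Your aside that this decomposition shows there are at most four internally disjoint $(x,y)$-paths ``since some bag separates $x$ from $y$'' is wrong: here $x$ and $y$ share a bag. That bound comes from the $4$-vertex separator $A=\{a_1,a_2,a_3,a_4\}$.

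The genuine gap is Step 3, which lists strategies but proves nothing.
\begin{itemize}
\item The Helly ``rainbow bag'' exists, but nothing forces it to contain $x$ or $y$, so it does not bear on the ghost-edge property.
\item Your candidate treewidth-$5$ minors are not of treewidth $5$: $K_6-e$ and $K_{2,2,2}$ both have treewidth $4$.
\item Observing that each of the four disjoint paths uses a distinct vertex of $S$ only yields $|S|=4$.
\item Your guess that the separator contains $d_2$ is incorrect.
\end{itemize}

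What is missing is the following chain.
\begin{enumerate}
\item The only set $S$ with $|S|\le4$ and $S\cap\{x,y\}=\emptyset$ separating $x$ from $y$ is $A$. The four disjoint paths force $|S|=4$. The two components $C_1,C_2$ of $G\backslash\{x,y,d_2d_4\}$ are $2$-connected, so $S$ takes two vertices from each. Within each component, only $\{a_1,a_2\}$ (resp.\ $\{a_3,a_4\}$) cuts the relevant paths.
\item Take $s\in V(T_x)$ closest to $T_y$. Then $B_s\backslash\{x\}$ is such a separator of size at most $4$, so $B_s=A\cup\{x\}$ is a \emph{full} bag.
\item The set $\{b_1,\dots,b_4,d_1,\dots,d_4\}$ induces a connected subgraph avoiding $B_s$, and every vertex of $B_s$ has a neighbour in it. Hence its subtree lies in one component of $T-s$. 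By (T3), the neighbour $s_1$ of $s$ in that component satisfies $B_s\subseteq B_{s_1}$. This contradicts the harmless normalisation that no bag is contained in another.
\end{enumerate}

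In minor language, $G$ plus a clique on $B_s$ still has width $4$ via $(T,\mathcal{B})$, yet contracting $\{b_i,d_i\}$ to one vertex yields $K_6$. So your ``treewidth-$5$ minor'' instinct can be made to work. It has to happen on $x$'s side, though, and only after establishing that $A\cup\{x\}$ fills an entire bag — not via a rooted $K_5$ or by contracting $y$'s side.
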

\begin{proof}
First we show that $tw(G)=4$. Since $G$ contains a minor isomorphic to $K_{5}$ (the bold connected subgraphs in Figure 1 are branch sets of a $K_5$-minor of $G$), we have $tw(G)\geq 4$. Hence, it suffices to show that $tw(G)\leq 4$.
Let $H_1, H_2$ be the subgraphs of $G$ induced by the union of $\{x,y\}$ and the vertex set of the component of $G\del \{x,y,d_2d_4\}$ containing $a_1$ and $a_3$, respectively. Assume that there is a tree-decomposition $(T_1,\cB_1)$ of $H_1$ with width 4 such that $\{x,y,d_2\}$ is contained in a bag $B_{s_1}$ of $(T_1,\cB_1)$ for some $s_1\in V(T_1)$. Since $H_1,H_2$ are isomorphic graphs, there is also a tree-decomposition $(T_2,\cB_2)$ of $H_2$ with width 4 such that $\{x,y,d_4\}$ is contained in a bag $B_{s_2}$ of $(T_2,\cB_2)$ for some $s_2\in V(T_2)$. Let $T$ be a tree obtained from $T_1\cup T_2$ by adding a new vertex $s$ linking $s_1$ and $s_2$. For any $u\in V(T)$, define $B_u$ the same as its corresponding bag in $(T_i,\cB_i)$ when $u\in V(T_i)$ for some $1\leq i\leq2$, and set $B_s:=\{x,y,d_2,d_4\}$. Since $\{x,y\}=V(H_1)\cap V(H_2)$, $\{x,y,d_2\}\subseteq B_{s_1}$ and $\{x,y,d_4\}\subseteq B_{s_2}$, for any vertex $v\in V(G)$, the subgraph $T_v$ is connected. Moreover, since $B_s=\{x,y,d_2,d_4\}$ and $d_2d_4$ is the only edge of $G$ not in $H_1$ or $H_2$, the defined $(T,\cB)$ is a tree decomposition of $G$ with width 4. Hence, it suffices to show that Figure 2 is a tree decomposition of $H_1$, which is easy to see that it is true. Hence, $tw(G)\leq 4$, implying $tw(G)=4$.

Next we show that $xy$ is a $4$-ghost-edge of $G$. Set $A:=\{a_1,a_2,a_3,a_4\}$.
\begin{claim}\label{S}
Let $S$ be a subset of $V(G)$ with $|S|\leq4$ and $S\cap\{x,y\}=\emptyset$. If there is no $(x,y)$-path in $G\del S$, then $S=A$.
\end{claim}
\begin{proof}
Since there are four internally vertex disjoint $(x,y)$-paths in $G$ and $|S|\leq4$, we have $|S|=4$. Moreover, since the components $C_1,C_2$ of $G\del\{x,y,d_2d_4\}$ are 2-connected, $|S\cap V(C_1)|=|S\cap V(C_2)|=2$. By symmetry we may assume that $a_1\in V(C_1)$. Note that except $\{a_1,a_2\}$, no two-vertex set of $C_1$ intersects all $(\{u_1,u_2,d_1\},\{v_1,u_2,c_3\})$-paths of $C_1$, so $S\cap V(C_1)=\{a_1,a_2\}$, where an $(s,t)$-path path $P$ of $G$ is an {\em $(S,T)$-path} if $s\in S, t\in T$ and  %one end of $P$ is in $X$ and the other end is in $Y$, and
no interior vertex of $P$ is contained in $S\cup T$ for any disjoint subsets $S,T$ of $V(G)$. By the symmetry between $C_1$ and $C_2$, we have $S\cap V(C_2)=\{a_3,a_4\}$, so $S=A$.
\end{proof}

Assume that $xy$ is not a 4-ghost-edge of $G$. Then there is an optimal tree decomposition $(T,\cB)$ of $G$ such that $V(T_x)\cap V(T_y)=\emptyset$ as $tw(G)=4$. Let $s\in V(T_x)$ and $t\in V(T_y)$ be the vertices that realise the closest distance between $T_x$ and $T_y$. Without loss of generality we may further assume that no bag of $(T,\cB)$ is a subset of another bag.
\begin{claim}\label{}
$B_s=A\cup\{x\}$ and $B_t=A\cup\{y\}$.
\end{claim}
\begin{proof}
By the definition of $s,t$ and Lemma \ref{subgraph}, there is no $(x,y)$-path in $G\del(B_s\del\{x\})$ and $G\del(B_t\del\{y\})$, so $B_s\del\{x\}=B_t\del\{y\}=A$ by Claim \ref{S} as $|B_s|,|B_t|\leq5$. This proves this claim.
\end{proof}
Set $H:=G[B_s\cup\{b_1,b_2,b_3,b_4,d_1,d_2,d_3,d_4\}]$. Since $H$ is connected, $T_H$ is connected and $(T_H,\cB\cap V(H))$ is a tree-decomposition of $H$ by Lemma \ref{subgraph}. When $s$ is a leaf vertex of $T_H$, since each vertex in $B_s$ has a neighbour in $V(H)-B_s$, we have $B_{s'}\cap V(H)=B_s$ by (T2), implying $B_{s'}=B_s$ as $(T,\cB)$ has width 4, where $s'$ is the unique neighbour of $s$ in $T_H$, which is a contradiction as no bag of $(T,\cB)$ is a subset of another bag. So $s$ is not a leaf vertex of $T_H$, implying that $H\del B_s$ is not connected, a contradiction to the fact that $H\del B_s$ is connected.
\end{proof}

\begin{proof}[Proof of Theorem \ref{main result}.]
Let $G$ be the graph pictured as Figure 1. Since there are four internally vertex disjoint $(x,y)$-paths in $G$, Conjecture \ref{conj} is not true by Theorem \ref{main}.
\end{proof}

\section{Acknowledgments}
This research was partially supported by grants from the Natural Sciences Foundation of Fujian Province (grant 2025J01486).
%and the Open Project Program of Key Laboratory of Discrete Mathematics with Applications of Ministry of Education (grant J20250601), Fuzhou University. The author thanks Zijian Deng for carefully reading this paper and finding an err in the proof of Lemma \ref{prism}. %grants from the National Natural Sciences Foundation of China (No. 11971111).

\end{document}